\documentclass[11pt,reqno]{amsart}

\setlength{\parindent}{0pt} \setlength{\textwidth}{5.8in}
\setlength{\oddsidemargin}{0.4in}
\setlength{\evensidemargin}{0.4in}
\parskip = 4pt
\newtheorem{proposition}{Proposition}[section]
\newtheorem{lemma}[proposition]{Lemma}

\newtheorem{theorem}[proposition]{Theorem}

\theoremstyle{definition}

\newcommand{\thlabel}[1]{\label{th:#1}}
\newcommand{\thref}[1]{Theorem~\ref{th:#1}}
\newcommand{\selabel}[1]{\label{se:#1}}

\newcommand{\prlabel}[1]{\label{pr:#1}}
\newcommand{\prref}[1]{Proposition~\ref{pr:#1}}

\newcommand{\eqlabel}[1]{\label{eq:#1}}
\newcommand{\equref}[1]{(\ref{eq:#1})}

\newcommand{\Hom}{{\rm Hom}}
\newcommand{\End}{{\rm End}}

\def\ot{\otimes}

\newcommand{\Cc}{\mathcal{C}}

\def\*C{{}^*\hspace*{-1pt}{\Cc}}
\def\text#1{{\rm {\rm #1}}}


\usepackage{amssymb}
\usepackage{color,amssymb,graphicx,amscd}

\begin{document}

\title[Serre Theorem for involutory Hopf algebras] {Serre Theorem for involutory Hopf algebras}

\author{G. Militaru}\thanks{This work was supported by
CNCSIS grant 24/28.09.07 of PN II "Groups, quantum groups, corings
and representation theory". }
\address{Faculty of Mathematics and Computer Science, University of Bucharest, Str.
Academiei 14, RO-010014 Bucharest 1, Romania}
\email{gigel.militaru@fmi.unibuc.ro and gigel.militaru@gmail.com}

\subjclass[2000]{16W30, 16S40} \keywords{Hopf algebras, semisimple
modules, Chevalley property}

\begin{abstract}
We call a monoidal category ${\mathcal C}$ a Serre category if for
any $C$, $D \in {\mathcal C}$ such that $C\ot D$ is semisimple,
$C$ and $D$ are semisimple objects in ${\mathcal C}$. Let $H$ be
an involutory Hopf algebra, $M$, $N$ two $H$-(co)modules such that
$M \otimes N$ is (co)semisimple as a $H$-(co)module. If $N$ (resp.
$M$) is a finitely generated projective $k$-module with invertible
Hattory-Stallings rank in $k$ then $M$ (resp. $N$) is
(co)semisimple as a $H$-(co)module. In particular, the full
subcategory of all finite dimensional modules, comodules or
Yetter-Drinfel'd modules over $H$ the dimension of which is
invertible in $k$ are Serre categories.
\end{abstract}
\date{}
\maketitle

\section*{Introduction}
Around 1950 C. Chevalley \cite[pg. 18]{ch} proved a remarkable
result: if $k$ is a field of characteristic zero, $G$ an abitrary
group and $M$, $N$ two finite dimensional semisimple
$k[G]$-modules then $M\ot N$ is a semisimple $k[G]$-module.
Although the result is a purely algebraic one, the only proof
known up to date is the one given by Chevalley in which special
techniques from algebraic geometry and Lie algebras are used (cf.
\cite[pg. 16]{serre3}). The generalization of the Chevalley
theorem in positive characteristic was given by J.-P. Serre in
\cite{serre1} and the converse in \cite[Theorem 2.4]{serre2}. In
fact, we can formulate these properties in the more general
setting of monoidal categories:

\textbf{Local Chevalley-Serre Property:} \textit{Let $( {\mathcal
C}, \ot \, , I)$ be a monoidal category and $C$, $D$ two objects
of ${\mathcal C}$. Give a necessary and sufficient condition such
that $C\ot D$ is a semisimple object of ${\mathcal C}$}.

Starting from this problem we can derive the following concepts: a
monoidal category ${\mathcal C}$ is called a \textit{Serre
category} if for any $C$, $D \in {\mathcal C}$ such that $C\ot D$
is semisimple, $C$ and $D$ are semisimple objects in ${\mathcal
C}$. A monoidal category $( {\mathcal C}, \ot \, , I)$ is called a
\textit{Chevalley category} if $C\ot D$ is a semisimple object for
any semisimple objects $C$, $D$ of ${\mathcal C}$. The category of
all finite dimensional representations of a group $G$ over a field
of characteristic zero is the basic example of a Chevalley and
Serre category.

This note is dedicated to Serre categories. \prref{serepr} gives a
general example of a Serre categories: a monoidal category
${\mathcal C}$ such that any object has a strong dual and the
functors $C\ot - : {\mathcal C} \to {\mathcal C}$, $ -\ot C :
{\mathcal C} \to {\mathcal C}$ preserve the monomorphisms for all
$C \in {\mathcal C}$ is a Serre category. Together with a couple
of technical lemmas presented further on, this result leads to the
main \thref{teoremaprin}. In particular, if $k$ is a field and $H$
an involutory Hopf algebra, then the full subcategory of all
finite dimensional modules, comodules or Yetter-Drinfel'd modules
over $H$ the dimension of which is invertible in $k$ are Serre
categories.

We recall that a morphism $f: C\to D$ in a category ${\mathcal C}$
is called a split mono if there exists $g : D \to C$ a morphism in
${\mathcal C}$ such that $g\circ f = {\rm Id}_C$. The composition
of two split monos is a split mono and if $h\circ f$ is a split
mono then $f$ is a split mono. An object $D \in {\mathcal C}$ is
called semisimple if any monomorphism $f: C\to D$ is a split mono.

Throughout this note $k$ will be a commutative ring and all
modules, Hopf algebras, tensor products and homomorphisms are over
$k$. A $k$-module $N$ is finitely generated projective if and only
if there exists a (unique) element $R_N = \sum_{i = 1}^n e_i \ot
e_i^* \in N\ot N^*$, called the canonical element of $N$ (or, by
abuse of language, dual basis of $N$) such that
\begin{equation}\eqlabel{1}
\sum _{i = 1}^n e_i^* (n) \, e_i = n \qquad {\rm and} \qquad \sum
_{i = 1}^n n^* (e_i)\, e_i^* = n^*
\end{equation}
for all $n \in N$ and $n^*\in N^*$. We denote by $r_k (N)  = \sum
_{i = 1}^n e_i^* (e_i)\in k$ the (Hattory-Stallings) rank of $N$;
if $k$ is a field then $r_k (N)$ is exactly ${\rm dim} (N) 1_k$.
For a Hopf algebra $H$ we will extensively use Sweedler's
sigma-notation: $\Delta (h) = h_{(1)}\otimes h_{(2)}\in H\otimes
H$ (summation understood). $H$ is called involutory if $S^2 = {\rm
Id}_H$. $( {}_H{\mathcal M}, \ot, k )$ will be the monoidal
category of left $H$-modules and $H$-linear maps: $k$ is a left
$H$-module via the trivial action $h \cdot a = \varepsilon (h) a$,
for all $h\in H$ and $a\in k$ and if $M$ and $N$ are two left
$H$-modules $M\ot N$ is a left $H$-module via the diagonal map: $
h \cdot (m\ot n) := h_{(1)} m \ot h_{(2)} n$, for all $h\in H$,
$m\in M$ and $n\in N$. The $k$-linear dual $N^* = \Hom (N, k)$ is
a left $H$-module via
\begin{equation}\eqlabel{0}
< h \cdot n^*,  n > := <n^*, S(h) n >
\end{equation}
for all $h\in H$, $n^*\in N^*$ and $n\in N$. $( {\mathcal M}^H,
\ot, k )$ will be the monoidal category of right $H$-comodules and
$H$-colinear maps. For a right $H$-comodule $(M, \rho) \in
{\mathcal M}^H$ we denote the coaction by $\rho (m) = m_{<0>} \ot
m_{<1>}$ (summation understood), for all $m\in M$. If $M$ and $N$
are right $H$-comodule then $M\ot N$ is a right $H$-comodule via
$\rho (m\ot n) = m_{<0>} \ot n_{<0>} \ot m_{<1>}n_{<1>}$, for all
$m \in M$, $n\in N$. We recall that a right $H$-comodule is called
cosemisimple if it is a semisimple object in the category
${\mathcal M}^H$. By $H$-module (resp. $H$-comodule) we mean a
left $H$-module (resp. a right $H$-comodule). ${}_H{\mathcal
YD}^H$ will be the monoidal category of (left-right)
Yetter-Drinfel'd modules: an object of it is at once a left
$H$-module and a right $H$-comodule such that
$$
 h_{(1)} m_{<0>}\otimes h_{(2)}m_{<1>}= (h_{(2)}
m)_{<0>}\otimes ( h_{(2)}  m)_{<1>}h_{(1)}
$$
for all $h\in H$ and $m\in M$. The morphisms of ${}_H{\mathcal
YD}^H$ are $H$-linear and $H$-colinear maps.

\section{Serre Theorem for involutory Hopf algebras}\selabel{2}
Let $( {\mathcal C}, \ot \, , I)$ be a monoidal category. We shall
say that an object $D \in {\mathcal C}$ has a \textit{strong right
dual} if there exists an object $D^* \in {\mathcal C}$ and a split
mono $i_D : I \to D\ot D^*$ in ${\mathcal C}$. An object $C \in
{\mathcal C}$ has a \textit{strong left dual} if there exists an
object $C^* \in {\mathcal C}$ and a split mono $i_C : I \to C^*\ot
C$ in ${\mathcal C}$. A monoidal category $( {\mathcal C}, \ot \,
, I)$ is called a \textit{Serre category} if for any $C$, $D \in
{\mathcal C}$ such that $C\ot D$ is semisimple we have that $C$
and $D$ are semisimple objects in ${\mathcal C}$.

\begin{proposition}\prlabel{serepr}
Let $( {\mathcal C}, \ot \, , I)$ be a monoidal category, $C$ and
$D$ two objects in ${\mathcal C}$ such that $C\ot D$ is a
semisimple object in ${\mathcal C}$. The following holds:
\begin{enumerate}
\item If $D$ has a strong right dual and the functor $ - \ot D :
{\mathcal C} \to {\mathcal C}$ preserves monomorphisms then $C$ is
semisimple in ${\mathcal C}$.

\item If $C$ has a strong left dual and the functor $C\ot - :
{\mathcal C} \to {\mathcal C}$ preserves monomorphisms then $D$ is
semisimple in ${\mathcal C}$.
\end{enumerate}
In particular, if any object  of ${\mathcal C}$ has strong left
and right duals and the functors $C\ot - : {\mathcal C} \to
{\mathcal C}$, $ -\ot C : {\mathcal C} \to {\mathcal C}$ preserve
monomorphisms for all $C \in {\mathcal C}$ then ${\mathcal C}$ is
a Serre category.
\end{proposition}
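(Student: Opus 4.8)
The plan is to establish part (1) directly from the definition of a semisimple object; part (2) will then follow by the mirror-image argument, and the concluding ``in particular'' assertion is immediate. So, under the hypotheses of (1), I would fix an arbitrary monomorphism $f : X \to C$ in $\mathcal{C}$ and aim to produce a retraction $h : C \to X$ with $h \circ f = {\rm Id}_X$; exhibiting such an $h$ for every mono into $C$ is exactly what it means for $C$ to be semisimple.

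The first step is to push $f$ into the semisimple object $C \ot D$. Since the functor $-\ot D$ preserves monomorphisms, $f \ot D : X \ot D \to C \ot D$ is again a monomorphism, and semisimplicity of $C \ot D$ makes it a split mono; I fix a retraction $g : C \ot D \to X \ot D$ with $g \circ (f \ot D) = {\rm Id}_{X \ot D}$. The second step cancels the auxiliary factor $D$ by means of the strong right dual. Writing $i_D : I \to D \ot D^*$ for the given split mono and $p_D$ for a chosen retraction (so $p_D \circ i_D = {\rm Id}_I$), and invoking Mac Lane's coherence theorem to assume $\mathcal{C}$ strict, I would set
\[
h : C = C \ot I \xrightarrow{C \ot i_D} C \ot D \ot D^* \xrightarrow{g \ot D^*} X \ot D \ot D^* \xrightarrow{X \ot p_D} X \ot I = X .
\]
Verifying $h \circ f = {\rm Id}_X$ is then a short diagram chase: bifunctoriality of $\ot$ slides $f$ past $i_D$, rewriting $(C \ot i_D) \circ f$ as $(f \ot D \ot D^*) \circ (X \ot i_D)$; the middle composite $(g \ot D^*) \circ (f \ot D \ot D^*)$ collapses to the identity because $g \circ (f \ot D) = {\rm Id}_{X \ot D}$; and $(X \ot p_D) \circ (X \ot i_D) = X \ot (p_D \circ i_D)$ is the identity. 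Hence $f$ is a split mono and $C$ is semisimple.

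Part (2) is entirely symmetric: tensoring $f : Y \to D$ on the left by $C$ yields a split mono $C \ot f$ with retraction $g$, and the strong left dual $i_C : I \to C^* \ot C$ (with retraction $p_C$) gives
\[
h : D = I \ot D \xrightarrow{i_C \ot D} C^* \ot C \ot D \xrightarrow{C^* \ot g} C^* \ot C \ot Y \xrightarrow{p_C \ot Y} I \ot Y = Y ,
\]
which splits $f$ by the same computation. The final claim is then immediate: for any $C$, $D$ with $C \ot D$ semisimple, the standing hypotheses allow me to apply (1) to conclude $C$ is semisimple and (2) to conclude $D$ is semisimple, so $\mathcal{C}$ is a Serre category. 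I anticipate no genuine obstacle here; the only delicate point is the coherence bookkeeping in the diagram chase, and passing to a strict monoidal category reduces the whole verification to the two elementary cancellations $g \circ (f \ot D) = {\rm Id}_{X \ot D}$ and $p_D \circ i_D = {\rm Id}_I$.
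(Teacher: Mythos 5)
Your argument is correct and is essentially the paper's own proof: the paper phrases it via the commutative square $({\rm Id}_C\ot i_D)\circ j=(j\ot{\rm Id}_D\ot{\rm Id}_{D^*})\circ({\rm Id}_{C'}\ot i_D)$ together with the closure of split monos under composition and left-cancellation, and unwinding that yields precisely your explicit retraction $h=(X\ot p_D)\circ(g\ot D^*)\circ(C\ot i_D)$. The symmetric treatment of (2) and the derivation of the ``in particular'' clause also match the paper.
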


\begin{proof}
1) Let $ j : C' \to C$ be a monomorphism in ${\mathcal C}$. Then
$j \ot {\rm Id}_D : C'\ot D \to C\ot D$ is a monomorphism in
${\mathcal C}$, hence is a split mono as $C\ot D$ is semisimple.
Consider the commutative diagram
$$
\begin {CD}
C' \cong C'\ot I @> {\rm Id}_{C'}\ot i_D >> C'\ot D \ot D^*\\
@VVj V @VV j\ot {\rm Id}_D \ot {\rm Id}_{D^*} V\\
C\cong C\ot I @> {\rm Id}_C \ot i_D >> C\ot D \ot D^*
\end{CD}
$$
Now, ${\rm Id}_{C'}\ot i_D$ and $j\ot {\rm Id}_D \ot {\rm
Id}_{D^*}$ are split monos; hence $ ({\rm Id}_C \ot i_D)\circ j$
is a split mono, thus $j$ is a split mono i.e. $C$ is semisimple.

2) Let $ j : D' \to D$ be a monomorphism in ${\mathcal C}$. Then
${\rm Id}_C \ot j : C\ot D' \to C\ot D$ is a monomorphism in
${\mathcal C}$, hence it is a split mono as $C\ot D$ is
semisimple. Consider the commutative diagram
$$
\begin {CD}
D' \cong I\ot D' @> i_C \ot {\rm Id}_{D'}  >> C^*\ot C \ot D'\\
@VVj V @VV  {\rm Id}_{C^*} \ot {\rm Id}_C \ot j  V\\
D\cong I\ot D @> i_C \ot {\rm Id}_D >> C^*\ot C \ot D
\end{CD}
$$
where $i_C \ot {\rm Id}_{D'}$ and ${\rm Id}_{C^*} \ot {\rm Id}_C
\ot j$ are split monos; hence $ (i_C \ot {\rm Id}_D) \circ j $ is
a split mono, thus $j$ is a split mono i.e. $D$ is semisimple.
\end{proof}

In order to give the main examples of Serre categories we need
some technical results:

\begin{lemma}
Let $H$ be a Hopf algebra, $N$ be a right $H$-comodule, finitely
generated and projective over $k$. Then $N^* = \Hom (N, k)$ has a
structure of right $H$-comodule via the coaction given by
\begin{equation}\eqlabel{coac}
\rho_{N^*} (e_i ^*) = \sum_{j=1}^n \, e_j ^* \ot S \bigl(
(e_{j})_{<1>} \bigl ) \, e_i ^* \bigl( (e_{j})_{<0>} \bigl)
\end{equation}
for all $i = 1, \cdots, n$, where $\sum_{i = 1}^n e_i \ot e_i^*
\in N\ot N^*$ is the canonical element of $N$.
\end{lemma}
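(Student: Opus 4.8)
The plan is to first replace the generator-level formula \equref{coac} by an intrinsic description valid on all of $N^*$, and then to verify the two comodule axioms against it. Since every $n^* \in N^*$ satisfies $n^* = \sum_i n^*(e_i)\, e_i^*$, I would extend \equref{coac} $k$-linearly and collapse the resulting double sum using the first relation in \equref{1}, namely $\sum_i e_i^*(x)\, e_i = x$; this produces the closed form
$$
\rho_{N^*}(n^*) = \sum_{j=1}^n e_j^* \ot S\bigl((e_j)_{<1>}\bigr)\, n^*\bigl((e_j)_{<0>}\bigr).
$$
This expression is manifestly $k$-linear and, because the canonical element $R_N$ is unique, independent of any choices; since $N^*$ is again finitely generated projective, it takes values in $N^* \ot H$, so it is a legitimate candidate for a coaction.

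The counit axiom is the routine step. Applying $\Id_{N^*} \ot \varepsilon$ and using $\varepsilon \circ S = \varepsilon$ turns the closed form into $\sum_j e_j^*\, \varepsilon\bigl((e_j)_{<1>}\bigr)\, n^*\bigl((e_j)_{<0>}\bigr)$; the counit property $(e_j)_{<0>}\, \varepsilon\bigl((e_j)_{<1>}\bigr) = e_j$ of the comodule $N$ collapses this to $\sum_j n^*(e_j)\, e_j^*$, which equals $n^*$ by the second relation in \equref{1}.

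The heart of the proof, and where I expect the index bookkeeping to be delicate, is coassociativity: $(\rho_{N^*} \ot \Id_H) \circ \rho_{N^*} = (\Id_{N^*} \ot \Delta) \circ \rho_{N^*}$. Evaluating the left-hand side on $e_i^*$ produces a sum over two indices; I would collapse the inner sum by the same dual-basis device, applying the map $n \ot h \mapsto e_i^*(n)\, S(h)$ to the identity $\sum_j e_j^*(y)\, \rho(e_j) = \rho(y)$ with $y = (e_l)_{<0>}$, so that the inner sum becomes $e_i^*\bigl(((e_l)_{<0>})_{<0>}\bigr)\, S\bigl(((e_l)_{<0>})_{<1>}\bigr)$. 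On the right-hand side the decisive input is the order-reversal of the antipode, $\Delta(S(h)) = S(h_{(2)}) \ot S(h_{(1)})$. Using the coassociativity of the $N$-coaction to pass to the iterated coaction $(e_l)_{<0>} \ot (e_l)_{<1>} \ot (e_l)_{<2>}$, both sides reduce to $\sum_l e_l^* \ot S\bigl((e_l)_{<2>}\bigr) \ot S\bigl((e_l)_{<1>}\bigr)\, e_i^*\bigl((e_l)_{<0>}\bigr)$, establishing equality. The main obstacle is purely organizational: keeping the Sweedler indices aligned while the antipode's reversal and the comodule coassociativity act simultaneously; once the inner sums have been collapsed by the dual-basis relation, the two sides agree verbatim. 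Note that the antipode being involutory plays no role here, which is consistent with the statement being made for an arbitrary Hopf algebra.
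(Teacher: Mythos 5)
Your proposal is correct and follows essentially the same route as the paper: your closed form $\rho_{N^*}(n^*) = \sum_j e_j^* \ot S\bigl((e_j)_{<1>}\bigr)\, n^*\bigl((e_j)_{<0>}\bigr)$ is precisely the intrinsic description the paper records (namely $f_{<1>}\, f_{<0>}(n) = S(n_{<1>})\, f(n_{<0>})$), and your verification of the counit and coassociativity axioms correctly carries out the ``straightforward verification'' that the paper leaves to the reader, with the dual-basis collapse, the coassociativity of $\rho_N$, and the anti-comultiplicativity of $S$ deployed exactly where they are needed.
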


\begin{proof}
This is a well-know structure and the proof is just a
straightforward verification. Using the $\sum$-notation the
coaction given by \equref{coac} can be written as follows:
$$
\rho_{N^*} (f) = f_{<0>} \ot f_{<1>} \in N^* \ot H \quad {\rm iff}
\quad f_{<1>} f_{<0>} (n) = S(n_{<1>}) f (n_{<0>})
$$
for all $n\in N$ and $f \in N^*$.
\end{proof}

Let $N\in {}_H{\mathcal YD}^H$ be a Yetter-Drinfel'd module,
finitely generated and projective over $k$; then $N^* = \Hom (N,
k) \in {}_H{\mathcal YD}^H$ via the $H$-module structure given by
\equref{0} and the $H$-comodule structure given by \equref{coac}:
this is the left-right version of \cite[Proposition 4.4.2]{lara}.

\begin{proposition}\prlabel{serepropn}
Let $H$ be an involutory Hopf algebra. Then:
\begin{enumerate}
\item Any $H$-module (resp. $H$-comodule) that is finitely
generated projective as a $k$-module with invertible rank in $k$
has a right and a left strong dual in the category ${}_H{\mathcal
M}$ (resp. ${\mathcal M}^H$). \item Any Yetter-Drinfel'd module
that is finitely generated projective as a $k$-module with
invertible rank in $k$ has a right and a left strong dual in the
category ${}_H{\mathcal YD}^H$.
\end{enumerate}
\end{proposition}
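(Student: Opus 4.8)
The plan is to exhibit, for a finitely generated projective $N$ with invertible rank $r:=r_k(N)=\sum_i e_i^*(e_i)$, explicit coevaluation and evaluation maps built from the canonical element $R_N=\sum_i e_i\ot e_i^*$, and to check that a rescaled evaluation splits the coevaluation. I take $N^*$ as the candidate dual, equipped with the $H$-action \equref{0} in the module case, the $H$-coaction \equref{coac} in the comodule case, and with both structures (so that $N^*$ is again a Yetter-Drinfel'd module, as quoted above) in the Yetter-Drinfel'd case. For the strong \emph{right} dual I define $i_N\colon k\to N\ot N^*$ by $i_N(1)=\sum_i e_i\ot e_i^*$ and a retraction $p\colon N\ot N^*\to k$ by $p(n\ot f)=r^{-1}f(n)$; the dual-basis relations \equref{1} give $p\circ i_N(1)=r^{-1}\sum_i e_i^*(e_i)=1$, so $p\circ i_N={\rm Id}_k$. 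For the strong \emph{left} dual I use instead $i_N'\colon k\to N^*\ot N$, $i_N'(1)=\sum_i e_i^*\ot e_i$, with the same rescaled evaluation $p'(f\ot n)=r^{-1}f(n)$ as retraction. The invertibility of the rank is used only here, to make the retractions left inverses.

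The substance is to verify that these maps are morphisms in the relevant category. In ${}_H{\mathcal M}$ the $H$-invariance $\sum_i h_{(1)}e_i\ot h_{(2)}e_i^*=\varepsilon(h)\sum_i e_i\ot e_i^*$ of the coevaluation follows by pairing the $N^*$-slot against an arbitrary $n$: by \equref{0} this slot collapses via \equref{1} to $S(h_{(2)})n$, and one is left with $\sum h_{(1)}S(h_{(2)})n=\varepsilon(h)n$, the ordinary antipode axiom. The $H$-linearity of $p$ reduces in the same way to the identity $\sum S(h_{(2)})h_{(1)}=\varepsilon(h)1_H$. For the left dual the roles interchange: the linearity of $i_N'$ requires $\sum h_{(2)}S(h_{(1)})=\varepsilon(h)1_H$, while $p'$ is handled by the ordinary axiom $\sum S(h_{(1)})h_{(2)}=\varepsilon(h)1_H$. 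In ${\mathcal M}^H$ the computations are dual (using the characterization $f_{<1>}f_{<0>}(n)=S(n_{<1>})f(n_{<0>})$ of \equref{coac} together with coassociativity): colinearity of the coevaluation needs only $\sum h_{(1)}S(h_{(2)})=\varepsilon(h)1_H$, whereas colinearity of the evaluation reduces to $\sum h_{(2)}S(h_{(1)})=\varepsilon(h)1_H$.

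The main, and essentially the only, obstacle is that the two \emph{reversed} antipode identities $\sum S(h_{(2)})h_{(1)}=\varepsilon(h)1_H$ and $\sum h_{(2)}S(h_{(1)})=\varepsilon(h)1_H$ fail for a general Hopf algebra; this is exactly where the involutory hypothesis is forced. Applying the anti-algebra, anti-coalgebra map $S$ to $\sum S(h_{(1)})h_{(2)}=\varepsilon(h)1_H$ gives $\sum S(h_{(2)})S^2(h_{(1)})$, and $\sum h_{(2)}S(h_{(1)})$ arises symmetrically from $\sum h_{(1)}S(h_{(2)})=\varepsilon(h)1_H$; using $S^2={\rm Id}_H$ these become precisely the two reversed identities. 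Once they are in hand, all four maps are $H$-linear (resp. $H$-colinear), so $i_N$ and $i_N'$ are split monos and $N^*$ is simultaneously a strong right and a strong left dual of $N$ in ${}_H{\mathcal M}$ (resp. ${\mathcal M}^H$), proving part (1).

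Part (2) follows at once. For a Yetter-Drinfel'd module $N$ the dual $N^*$ carries both structures, and the maps $i_N,\,i_N',\,p,\,p'$ above are simultaneously $H$-linear and $H$-colinear by the two sets of verifications just described; since the unit object $k$ carries the trivial action and coaction, these are morphisms in ${}_H{\mathcal YD}^H$. Hence $N^*$ is a strong right and left dual of $N$ in ${}_H{\mathcal YD}^H$ as well, and only $S^2={\rm Id}_H$ and the invertibility of $r_k(N)$ are used throughout.
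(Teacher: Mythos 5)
Your proposal is correct and follows essentially the same route as the paper: the same candidate dual $N^*$ with structures \equref{0} and \equref{coac}, the same coevaluation/rescaled-evaluation pair split via $ev_N\circ i_N=r_k(N)$, the same reduction (through $N\ot N^*\cong \End(N)$, i.e.\ pairing against an arbitrary $n$) to the two antipode identities, and the same observation that only the evaluation in the right-dual case and the coevaluation in the left-dual case force $S^2={\rm Id}_H$. The only cosmetic difference is that you derive the reversed identities $\sum S(h_{(2)})h_{(1)}=\varepsilon(h)1_H$ and $\sum h_{(2)}S(h_{(1)})=\varepsilon(h)1_H$ by applying $S$ to the standard axioms, where the paper simply invokes $S=S^{-1}$.
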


\begin{proof}
1) Let $N$ be a $H$-module that is finitely generated projective
as a $k$-module with invertible rank $r_k (N)$ in $k$. We shall
prove that $N^*$ with the structure given by \equref{0} is a
strong right and left dual of $N$ in ${}_H{\mathcal M}$. We view
the canonical element $R_N$ as a $k$-linear map
$$
i_N : k \to N\ot N^*, \qquad i_N (a) := a \, R_N  = a \sum_{i =
1}^n e_i \ot e_i^*
$$
for all $a \in k$ and we shall prove that $i_N$ is an $H$-module
map, i.e. we have to prove that
\begin{equation}\eqlabel{2}
\varepsilon (h) \sum_{i = 1}^n e_i \ot e_i^* = \sum_{i = 1}^n
h_{(1)} e_i \ot h_{(2)} \cdot e_i^*
\end{equation}
for all $h\in H$. We shall use the $k$-linear isomorphism
$$
\varphi_{N} : N\ot N^* \to \End (N), \quad \varphi_{N} (n\ot n^*)
(n') := \, <n^*, n'>\, n
$$
given by the fact that $N$ is finitely generated projective over
$k$. For $n\in N$ we have:
$$
\varphi_{N} \bigl( \varepsilon (h) \sum_{i = 1}^n e_i \ot e_i^*
\bigl) = \varepsilon (h) \sum _{i = 1}^n e_i^* (n) \, e_i
\stackrel{ \equref{1} } {=} \varepsilon (h) n
$$
and
\begin{eqnarray*}
\varphi_{N} \bigl(\sum_{i = 1}^n h_{(1)} e_i \ot h_{(2)} \cdot
e_i^* \bigl) (n) &=& \sum_{i = 1}^n  \bigl( h_{(2)} \cdot e_i^*
\bigl) (n) \, h_{(1)} e_i \\
&\stackrel{ \equref{0} } {=}& \sum_{i = 1}^n h_{(1)}\, e_i^*
\bigl( S
(h_{(2)}) n \bigl)\, e_i \\
&\stackrel{ \equref{1} } {=}& h_{(1)} S( h_{(2)} ) n = \varepsilon
(h) n
\end{eqnarray*}
hence \equref{2} holds and $i_N : k \to N\ot N^*$ is $H$-linear.
Let us consider now the evaluation map
$$
ev_N : N\ot N^* \to k, \qquad ev_N (n \ot n^*) = <n^*, n>
$$
for all $n\in N$ and $n^*\in N^*$. As $H$ is a involutory Hopf
algebra, $ev_N$ is a $H$-linear map. Indeed,
\begin{eqnarray*}
ev_N \bigl( h\cdot (n\ot n^*) \bigl)  &=& ev_N ( h_{(1)} n \ot
h_{(2)} \cdot n^* )\\
&=& < h_{(2)} \cdot n^*  , h_{(1)} n > \\
&=& < n^*, S(h_{(2)}) h_{(1)}
n > \\
&\stackrel{ (S = S^{-1}) } {=}& < n^*, \varepsilon (h) n > \\
&=& \varepsilon (h)\, ev_N ( n\ot n^* ) \\
&=& h\cdot ev_N ( n \ot n^* )
\end{eqnarray*}
for all $h\in H$, $n\in N$ and $n^* \in N^*$. Thus we have two
$H$-module maps
$$
i_N : k \to N\ot N^*, \qquad ev_N : N\ot N^* \to k
$$
such that
\begin{equation}\eqlabel{4}
ev_N \circ i_N = r_k (N)
\end{equation}
As $r_k (N)$ is invertible in $k$ the map $i_N : k \to N\ot N^*$
is a split mono in the category ${}_H{\mathcal M}$: the $H$-linear
retraction follows from \equref{4} and is given by $r_k (N) ^{-1}
ev_N:  N\ot N^* \to k$. Thus $N^*$ is a strong right dual of $N$
in ${}_H{\mathcal M}$.

We shall prove now that $N^*$ is also a strong left dual of $N$ in
${}_H{\mathcal M}$. We consider the $k$-linear map $j_N : k \to
N^* \ot N$, $j_N (a) := a \sum_{i=1}^n e_i^* \ot e_i$, for all $a
\in k$. Using the fact that $H$ is an involutory Hopf algebra we
can easily show that $i_N$ is an $H$-module map: here we use the
$k$-linear isomorphism
$$
\psi_{N}: N^*\ot N \to \End (N), \quad \psi_{N} (n^*\ot n) (n') :=
<n^*, n'> n
$$
given by the fact that $N$ is finitely generated and projective
over $k$. As $r_k(N)$ is invertible in $k$ the map $j_N : k \to
N^*\ot N$ is split mono in ${}_H{\mathcal M}$: the $H$-linear
retraction is given by $r_k(M) ^{-1} ev'_N: N^*\ot N \to k$, where
$ev'_N : N^* \ot N \to k, \quad ev'_N (n^* \ot n) := <n^*, n>$,
for all $n^*\in N^*$, $n\in N$ is the evaluation map and it is an
$H$-module map.

We shall consider now the comodule case. Let $N$ be an
$H$-comodule that is finitely generated projective as a $k$-module
with invertible rank $r_k (N)$ in $k$. We shall prove that $N^*$
with the structure given by \equref{coac} is a strong right and
left dual of $N$ in ${\mathcal M}^H$. First we prove that $i_N : k
\to N\ot N^*$, $i_N (a) = a \sum_{i = 1}^n e_i \ot e_i^* $, for
all $a \in k$ is an $H$-comodule map, where $k$ is an $H$-comodule
via the trivial coaction $\rho (a) = a \ot 1_H$. Thus we have to
prove the formula:
\begin{equation}\eqlabel{33}
\sum_{i=1}^n \, (e_{i})_{<0>} \ot (e_{i}^*)_{<0>} \ot
(e_{i})_{<1>} (e_{i}^*)_{<1>} = \sum_{i=1}^n \, e_i \ot e_i^* \ot
1_H
\end{equation}
If we apply $\rho$ in \equref{1} we obtain
\begin{equation}\eqlabel{44}
x_{<0>} \ot x_{<1>} = \sum_{i=1}^n \, e_i^* (x) (e_{i})_{<0>} \ot
(e_{i})_{<1>}
\end{equation}
for all $x\in N$. We denote LHS the left hand side of \equref{33}.
We have:
\begin{eqnarray*}
{\rm LHS} &\stackrel{ \equref{coac}} {=}& \sum_{i, j = 1}^n  \,
(e_{i})_{<0>} \ot e_j^* \ot
(e_{i})_{<1>} S\bigl( (e_{j})_{<1>} \bigl) e_i^* ( (e_{j})_{<0>} )\\
&=& \sum_{i, j = 1}^n  \, e_i^* ( (e_{j})_{<0>} ) (e_{i})_{<0>}
\ot e_j^* \ot (e_{i})_{<1>} S\bigl( (e_{j})_{<1>} \bigl) \\
&\stackrel{ \equref{44} } {=}& \sum_{j=1}^n (e_{j})_{<0>} \ot
e_j^* \ot (e_{j})_{<1>} S\bigl( (e_{j})_{<2>} \bigl)\\
&=&  \sum_{j=1}^n (e_{j})_{<0>} \ot e_j^* \ot \varepsilon \bigl( (e_{j})_{<1>} \bigl)1_H  \\
&=& \sum_{j=1}^n \, e_j \ot e_j^* \ot 1_H
\end{eqnarray*}
hence \equref{33} holds i.e. $ i_N : k \to N\ot N^*$, is an
$H$-comodule map. Using the fact that $H$ is involutory we shall
prove that the map $ev_N : N\ot N^* \to k$, $ev_N (n \ot n^*) =
<n^*, n>$, for all $n\in N$, $n^*\in N^*$ is also an $H$-comodule
map. This is equivalent to showing that:
\begin{equation}\eqlabel{55}
< (e_j^*)_{<0>} , \, (e_{i})_{<0>} > \ot \, (e_{i})_{<1>}
(e_j^*)_{<1>} = \delta_{ij} \ot 1_H
\end{equation}
for all $i$, $j = 1, \cdots, n$, where $\delta_{ij}$ is the
Kronecker sign. We denote LHS the left hand side of \equref{55}.
We have:
\begin{eqnarray*}
{\rm LHS} &\stackrel{ \equref{coac}} {=}& \sum_{t = 1}^n  \,
<e_t^*  ,\, (e_{i})_{<0>} > \ot \, (e_{i})_{<1>} S\bigl(
(e_{t})_{<1>} \bigl) e_j^* ( (e_{t})_{<0>} )\\
&=& \sum_{t = 1}^n  \, 1 \ot (e_{i})_{<1>} S\bigl( (e_{t})_{<1>}
\bigl) e_j^* \bigl( <e_t^*  ,\, (e_{i})_{<0>} > (e_{t})_{<0>}
\bigl)\\
&\stackrel{ \equref{44} } {=}& 1\ot (e_{i})_{<2>} S\bigl(
(e_{i})_{<1>} \bigl) e_j^* ( (e_{i})_{<0>} )\\
&\stackrel{ (S = S^{-1}) } {=}&  1 \ot \varepsilon ((e_{i})_{<1>})
1_H e_j^* ( (e_{i})_{<0>} ) \\
&=& 1 \ot e_j^* (e_i) 1_H = \delta_{ij} \ot 1_H
\end{eqnarray*}
hence \equref{55} holds and the evaluation map $ev_N : N\ot N^*
\to k$ is right $H$-colinear. Thus we have two $H$-comodule maps
$i_N : k \to N\ot N^*$ and $ev_N : N\ot N^* \to k$ such that $ev_N
\circ i_N = r_k (N)$. As $r_k (N)$ is invertible in $k$ the map
$i_N : k \to N\ot N^*$ is a split mono in the category ${\mathcal
M}^H$, i.e. $N^*$ is a strong right dual of $N$ in ${\mathcal
M}^H$.

Finally, we have to prove that $N^*$ is also a strong left dual of
$N$ in ${\mathcal M}^H$. The proof is similar to the computations
above. As $H$ is involutory and $r_k(N)$ is invertible in $k$ the
map $j_N : k \to N^*\ot N$ is split mono in ${\mathcal M}^H$: the
$H$-colinear retraction is given by $r_k(N) ^{-1} ev'_N: N^*\ot N
\to k$, where $ev'_N : N^* \ot N \to k, \quad ev'_N (n^* \ot n) :=
<n^*, n>$, for all $n^*\in N^*$, $n\in N$ is the evaluation map
and it is an $H$-comodule map.

2) If $N \in {}_H{\mathcal YD}^H$ then $N^* \in {}_H{\mathcal
YD}^H$ is a right and left dual of $N$ in the category of
Yetter-Drinfel'd modules: this follows from 1) as the maps $i_N :
k \to N\ot N^*$ and $j_N : k \to N^*\ot N$ are split mono as
$H$-linear and $H$-colinear maps, i.e. they are split mono in the
category ${}_H{\mathcal YD}^H$.
\end{proof}

Using \prref{serepr} and \prref{serepropn} we obtain the main
results of this note:

\begin{theorem}\thlabel{teoremaprin}
Let $H$ be an involutory Hopf algebra, $M$ and $N$ two
$H$-(co)modules such that $M \otimes N$ is (co)semisimple as a
$H$-(co)module. The following hold:
\begin{enumerate}
\item If $N$ is finitely generated projective as a $k$-module with
invertible rank in $k$ then $M$ is (co)semisimple as a
$H$-(co)module.

\item If $M$ is finitely generated projective as a $k$-module with
invertible rank in $k$ then $N$ is (co)semisimple as a
$H$-(co)module.
\end{enumerate}
In particular, if $k$ is a field, the full subcategory of all
finite dimensional $H$-modules (reps. $H$-comodules) that have
invertible dimension in $k$ is a Serre category.
\end{theorem}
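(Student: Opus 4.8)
The whole statement will follow by combining \prref{serepropn} with \prref{serepr}, so beyond matching hypotheses the only thing to verify is a flatness property of the tensor functors. First I would record that in both ${}_H{\mathcal M}$ and ${\mathcal M}^H$ a morphism whose underlying $k$-linear map is injective is a monomorphism, and conversely (over a field, or more generally when $H$ is $k$-flat) a monomorphism has injective underlying $k$-map. Since a finitely generated projective $k$-module is flat, tensoring over $k$ by $N$ sends injective $k$-maps to injective $k$-maps; as the diagonal (co)action is carried along formally, both $-\ot N$ and $N \ot -$ preserve monomorphisms in the relevant category whenever $N$ is finitely generated projective over $k$. This is the one technical point of the argument.

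Granting this, part (1) is immediate: if $M\ot N$ is (co)semisimple and $N$ is finitely generated projective with $r_k(N)$ invertible, then \prref{serepropn}(1) furnishes a strong right dual $N^*$ of $N$ in ${}_H{\mathcal M}$ (resp. ${\mathcal M}^H$), while the previous step shows $-\ot N$ preserves monomorphisms. Taking $C = M$ and $D = N$ in \prref{serepr}(1) then yields that $M$ is (co)semisimple. Part (2) is the mirror image: with $M$ finitely generated projective of invertible rank, \prref{serepropn}(1) gives a strong left dual of $M$ and $M\ot -$ preserves monomorphisms, so \prref{serepr}(2) applied to $C = M$, $D = N$ makes $N$ (co)semisimple.

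For the final assertion I would take $k$ a field and let ${\mathcal C}$ be the full subcategory of ${}_H{\mathcal M}$ (resp. ${\mathcal M}^H$) consisting of the finite dimensional (co)modules whose dimension is invertible in $k$. The point is that ${\mathcal C}$ is a monoidal subcategory closed under duals: the unit $k$ lies in ${\mathcal C}$, $\dim (M\ot N) 1_k = \dim(M)\dim(N) 1_k$ is invertible exactly when both $\dim(M)1_k$ and $\dim(N)1_k$ are, and $\dim N^* = \dim N$. Over a field every object is finitely generated projective and flat, so by \prref{serepropn}(1) every object of ${\mathcal C}$ has strong left and right duals and every tensor functor preserves monomorphisms. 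The concluding clause of \prref{serepr} then states precisely that ${\mathcal C}$ is a Serre category.

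The genuine obstacle is the monomorphism-preservation step, and within it the description of monomorphisms in ${\mathcal M}^H$: for the module case monos are injective $k$-maps unconditionally, but for comodules over an arbitrary commutative ring one must ensure the category is well enough behaved (e.g. $H$ flat over $k$) for monomorphisms to have injective underlying $k$-maps, after which flatness of $N$ finishes the argument; in the field case needed for the Serre-category statement this subtlety disappears.
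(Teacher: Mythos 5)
Your proof is correct and follows exactly the paper's route: the theorem is deduced by feeding \prref{serepropn} into \prref{serepr}, the only remaining point being that tensoring with a (finitely generated) projective $k$-module preserves monomorphisms, which the paper likewise records in one line. You are in fact more careful than the paper on the single delicate issue --- that a monomorphism in ${\mathcal M}^H$ over an arbitrary commutative ring need not have an injective underlying $k$-linear map unless, say, $H$ is $k$-flat or $k$ is a field --- a subtlety the paper's proof passes over in silence.
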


\begin{proof}
\prref{serepropn} shows that the hypotheses of \prref{serepr} are
true: we note that for a projective $k$-module $X$ the functors $X
\ot -$, $-\ot X$ preserve monomorphisms.
\end{proof}

Similarly we have obained the following:

\begin{theorem}\thlabel{teoremaprin'}
Let $H$ be an involutory Hopf algebra, $M$ and $N$ two
Yetter-Drinfel'd modules such that $M \otimes N$ is a semisimple
object of ${}_H{\mathcal YD}^H$. The following hold:
\begin{enumerate}
\item If $N$ is finitely generated projective as a $k$-module with
invertible rank in $k$ then $M$ is a semisimple object of
${}_H{\mathcal YD}^H$. \item If $M$ is finitely generated
projective as a $k$-module with invertible rank in $k$ then $N$ is
a semisimple object of ${}_H{\mathcal YD}^H$.
\end{enumerate}
In particular, if $k$ is a field, the full subcategory of all
finite dimensional Yetter-Drinfel'd modules that have invertible
dimension in $k$ is a Serre category.
\end{theorem}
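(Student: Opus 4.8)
The plan is to run exactly the argument of \thref{teoremaprin}, but inside the monoidal category ${\mathcal C} = {}_H{\mathcal YD}^H$ of Yetter--Drinfel'd modules, feeding \prref{serepr} the strong duals manufactured by \prref{serepropn}(2). For part (1), $N$ is finitely generated projective over $k$ with invertible rank, so \prref{serepropn}(2) supplies a strong right dual $N^*$ of $N$ in ${}_H{\mathcal YD}^H$; granting that the functor $-\ot N : {}_H{\mathcal YD}^H \to {}_H{\mathcal YD}^H$ preserves monomorphisms, \prref{serepr}(1) applied to $C = M$, $D = N$ yields that $M$ is a semisimple object of ${}_H{\mathcal YD}^H$. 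Part (2) is the mirror image: using the strong left dual $M^*$ of $M$ (again from \prref{serepropn}(2)) together with the fact that $M\ot - $ preserves monomorphisms, \prref{serepr}(2) applied to $C = M$, $D = N$ gives that $N$ is semisimple.

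The only point that genuinely requires work is the claim that tensoring by a finitely generated projective object preserves monomorphisms in ${}_H{\mathcal YD}^H$, and this is where I expect the main (if modest) obstacle to sit, since it is the one place projectivity is used beyond producing the dual. I would reduce it to two observations. First, a morphism in ${}_H{\mathcal YD}^H$ is a monomorphism precisely when it is injective on underlying $k$-modules, exactly as for the module and comodule categories underlying \thref{teoremaprin}; the reflecting direction is immediate because the forgetful functor to ${}_k{\mathcal M}$ is faithful, and the forward direction is cleanest over a field, where ${}_H{\mathcal YD}^H$ is abelian and the forgetful functor is exact. Second, the tensor product in ${}_H{\mathcal YD}^H$ is computed on underlying $k$-modules as $\ot_k$ equipped with the diagonal action and coaction, and a finitely generated projective $k$-module is flat; hence tensoring a $k$-injection by $N$ (or by $M$) over $k$ again yields a $k$-injection. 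Combining the two, $-\ot N$ and $M\ot -$ carry monomorphisms to monomorphisms, which is exactly the hypothesis demanded by \prref{serepr}.

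For the final assertion, suppose $k$ is a field and let $\mathcal{D}$ be the full subcategory of ${}_H{\mathcal YD}^H$ whose objects are the finite dimensional Yetter--Drinfel'd modules of dimension invertible in $k$. Over a field every finite dimensional object is finitely generated projective with $r_k(N)=\dim_k(N)\,1_k$, so each object of $\mathcal{D}$ has invertible rank. The subcategory $\mathcal{D}$ is monoidal: the unit $k$ lies in it, and $\dim_k(M\ot N)=\dim_k M\cdot\dim_k N$ is a product of invertible scalars, hence invertible. By \prref{serepropn}(2) every object of $\mathcal{D}$ has strong left and right duals, and these remain in $\mathcal{D}$ since $\dim_k N^*=\dim_k N$ is invertible; moreover both tensor functors preserve monomorphisms by the discussion above, every object being flat over the field $k$. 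Thus $\mathcal{D}$ meets all the hypotheses of the last assertion of \prref{serepr}, and therefore $\mathcal{D}$ is a Serre category.
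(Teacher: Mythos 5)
Your proposal is correct and follows the paper's own route exactly: the paper disposes of this theorem with the same one-line reduction used for \thref{teoremaprin}, invoking \prref{serepropn}(2) to supply strong left and right duals in ${}_H{\mathcal YD}^H$ and then \prref{serepr}, together with the remark that tensoring by a finitely generated projective $k$-module preserves monomorphisms. Your additional discussion of why $-\ot N$ and $M\ot -$ preserve monomorphisms (monos detected on underlying $k$-modules, projective implies flat) simply expands the paper's parenthetical observation rather than changing the argument.
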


More difficult is the converse of the last statement of
\thref{teoremaprin}, which is a generalization of the Chevalley
theorem for involutory Hopf algebras (we only state this for
characteristic zero):

\textbf{Question:} \textit{Let $k$ be a field of characteristic
zero, $H$ an involutory Hopf algebra and $M$ and $N$ two finite
dimensional (co)semisimple $H$-(co)modules. Is $M\ot N$
(co)semisimple as a $H$-(co)module?}

The answer is affirmative for $H = k[G]$ based on the Chevalley
theorem and the fact that $k[G]$ is cosemisimple as a coalgebra.
In the case that $H$ is finite dimensional: an involutory finite
dimensional Hopf algebra $H$ is semisimple and cosemisimple
following the well known theorem of Larson and Radford. In
generally, the Chevalley theorem fails for arbitrary
finite-dimensional Hopf algebras, an example being the
Frobenius-Lusztig kernels (\cite{aeg}).


\begin{thebibliography}{99}
\bibitem{aeg}
N. Andruskiewitsch, P. Etingof, and S. Gelaki, Triangular Hopf
algebras with the Chevalley property, {\sl Michigan Math. J.} {\bf
49} (2001), 277--298.

\bibitem{ch}
C. Chevalley, Theorie des Groupes de Lie, Vol. III, Hermann,
Paris, 1954.

\bibitem{lara}
L. A. Lambe and D. Radford, Algebraic aspects of the quantum
Yang-Baxter equation, {\sl J. Algebra} {\bf 154} (1992) 222--288.

\bibitem{serre1}
J.-P. Serre, Sur la semi-simplicit´e des produits tensoriels de
repr´esentations de groupes, {\sl Invent. Math.}  {\bf 116}(1994),
513--530.

\bibitem{serre2}
J.-P. Serre, Semisimplicity and Tensor Products of Group
Representations: Converse Theorems, {\sl J. Algebra} {\bf 194}
(1997) 496--520.

\bibitem{serre3}
J.-P. Serre, Moursund Lectures 1998, Notes by W.E. Duckworth,
availabe at http://math.uoregon.edu/resources/serre/
\end{thebibliography}
\end{document}